 \numberwithin{equation}{section}
\newtheorem{thm}{Theorem}[section]
\newtheorem{lem}[thm]{Lemma}
\newtheorem{cor}[thm]{Corollary}
\newtheorem{prop}[thm]{Proposition}
\newcommand{\be}{\begin{equation}}
\newcommand{\ee}{\end{equation}}
\newcommand{\ba}{\begin{array}}
\newcommand{\ea}{\end{array}}
\newcommand{\al}{\alpha}
\newcommand{\bt}{\beta}
\newcommand{\la}{\lambda}
\newcommand{\bea}{\begin{eqnarray}}
\newcommand{\eea}{\end{eqnarray}}
\newcommand{\Sum}{\sum_{n=0}^\infty}
 \renewcommand{\)}{\right) }
\title{Plancherel-Rotach Asymptotics for \\ $q$-Orthogonal Polynomials }
\author{  Mourad E.H. Ismail\thanks{The research of Mourad E.H. Ismail is supported by the Research Grants
Council of
Hong Kong  under contract \# 101411 and the  NPST Program of King Saud
University, project number 10-MAT1293-02.}  \; and    Xin  Li}
\begin{document}
 \maketitle
 \date{}
\begin{abstract}
 We establish the Plancherel-Rotach-type asymptotics around the largest zero (the soft edge asymptotics) for some classes of polynomials satisfying three-term recurrence relations with
exponentially increasing coefficients. As special cases, our results include this type of asymptotics for $q^{-1}$-Hermite polynomials of Askey, Ismail and Masson,
$q$-Laguerre polynomials, and the Stieltjes-Wigert polynomials. We also introduce a one parameter family of solutions to the $q$-difference equation of the Ramanujan function.
 \end{abstract}

\bigskip
\noindent MSC (2010):  Primary 33D45, 41A60; Secondary 65Q30

\noindent Keywords: $q$-orthogonal polynomials, asymptotics, recurrence relations, the Ramanujan function, error terms.

\bigskip

\noindent  Filename: PlancherelRotachRevised
\bigskip

\section{Introduction}
 Using the explicit representations and $q$-identities (in particular, the $q$-binomial theorem), Ismail~\cite{ismail2005}
derived the complete asymptotics expansion of $q^{-1}$-Hermite, $q$-Laguerre, and Stieltjes-Wigert polynomials near their respective largest zeros. One of
the main features is that the Ramanujan function $A_q(z)$, called the $q$-Airy function in \cite{ismail2005}, appears very naturally in all the
results. These asymptotics are called the soft edge asymptotics in random matrices. There are two other types of Plancherel-Rotach asymptotics, the
bulk scaling asymptotics where you normalize by the largest zero but keep $x$ in the oscillatory region, and the tail asymptotics which is
asymptotics beyond the largest and smallest zeros \cite{Sze}.  Ismail and Zhang \cite{Ism:Zha1}--\cite{Ism:Zha2} derived
these asymptotics and found
that the sine and cosine functions which appear in the  bulk scaling asymptotics of the Hermite and Laguerre polynomial asymptotics \cite{Sze} are
now replaced by theta functions.

In this work, we will start with the more general three-term recurrence relations that include these classical $q$-orthogonal polynomials as special cases and
derive the Plancherel-Rotach type asymptotics. A general characteristics of these recurrence relations is that the recurrence coefficients are
growing exponentially with $n$, the degree of the polynomials. The orthogonality measures of these polynomials are not unique.  Through careful scaling
and transformation, these recurrence relations can be transferred into more trackable ones on which we base our study of the convergence.  It must be
emphasized that we use the recurrence relation directly in contrast to the previous works, \cite{ismail2005}, \cite{Ism:Zha1}--\cite{Ism:Zha2}, which
use more detailed  information, such as explicit formulas.

The approach of deriving the asymptotic properties (e.g., ratio
asymptotics, zero distributions, etc.) of orthogonal polynomials
from assumptions on the asymptotic behavior of recurrence
coefficients can be traced back to the work of
Poincar\'e~\cite{poi} and  Blumenthal~\cite{blum}. See the
monograph in the Memoirs of AMS by Nevai~\cite{nevai} for a
systematic treatment of the class $M(a,b)$ (the Nevai-Blumenthal
class as it is referred to now, see, e.g., \cite{kuij}) where the
recurrence coefficients are assumed to be convergent. Starting
with the assumptions on the recurrence coefficients, Van
Assche~\cite{van1} and Van Assche and Geronimo~\cite{vange}
obtained the Plancherel-Rotach asymptoics outside the oscillatory
region (see also~\cite{van} and the references therein). Recently,
Tulyakov~\cite{Tu} proposed a new method for obtaining the
Plancherel-Rotach type asymptotics when the recurrence
coefficients are rational functions in $n$ (indeed, he dealt with
more general difference equations of order $p$). This method
allowed him to obtain the global picture of the asymptotic
behavior of the solutions, and as a demonstration, he applied  the
method to the Hermite and Meixner polynomials.

In recent years the Riemann-Hilbert problem approach has been a
powerful technique  in determining asymptotics of orthogonal
polynomials, see \cite{Dei} for the description of the
Riemann-Hilbert approach and \cite{Dei:Kri:McL:Ven:Zho} for the
application to derive asymptotics of polynomials orthogonal with
respect to exponential weights. So far the Riemann-Hilbert problem
has not been successfully applied  to $q$-orthogonal polynomials
except in the case of the Stieltjes-Wigert polynomials when $q =
e^{-1/(4n^2)}$ and $n$ is the degree of the polynomial, see
\cite{Bai:Sui}.

Throughout this work, we will assume that $0<q<1$.

Our model case is the $q^{-1}$-Hermite polynomials. Recall that the $q^{-1}$-Hermite polynomials $\{h_n(x~|~q)\}$ of Askey~\cite{Askey7} and Ismail and
Masson~\cite{ismailmasson} satisfy the recurrence
relation
\begin{equation}\label{eqqhermite}
2xh_n(x~|~q)=h_{n+1}(x~|~q)+q^{-n}(1-q^n)h_{n-1}(x~|~q),
\end{equation}
with $h_0(x~|~q)=1$ and $h_1(x~|~q)=2x$.  For detailed properties of the $q^{-1}$-Hermite polynomials see \cite[Chapter 21]{Ism}.
Using  a result in
\cite{ismailli} it is easy to see  that the largest zero of $h_n(x~|~q)$ is less than  (but close to) $q^{-n/2}$.   Let
\bea \label{eqxnt}
x_n(t)=\frac{1}{2}\left[q^{-n/2}t- q^{n/2}/t \right]. \eea Clearly, $x_{n\pm 1}(q^{\pm 1/2}t)=x_n(t)$. Set $x=x_n(t)$
in \eqref{eqqhermite}  to
get
$$ \left[q^{-n/2}t-\frac{q^{n/2}}{t}\right]h_n(x_n(t)~|~q)=h_{n+1}(x_{n+1}(q^{1/2}t)~|~q)+q^{-n}(1-q^n)h_{n-1}(x_{n-1}(q^{-1/2}t)~|~q).
$$
With $p_n(t)=t^{-n}q^{n^2/2}h_n(x_n(t)~|~q)$, the above recurrence becomes
\begin{equation}\label{eqhermite2}
\left(1-\frac{q^n}{t^2}\right)\; p_n(t)=p_{n+1}(q^{1/2}t)+\frac{1-q^n}{t^2}p_{n-1}(q^{-1/2}t).
\end{equation}
If $p_n(t) \to f(t)$ uniformly on compact subsets of $\mathbb{C} \setminus\{0\}$,
 as $n \to \infty$, then $f(t)$ will be analytic in ${\mathbb C}\setminus \{0\}$ and satisfy
 \begin{equation}\label{eqtwo}
f(t)=f(q^{1/2}t)+\frac{1}{t^2}f(q^{-1/2}t)
\end{equation}
Since $p_n(t)$ is a polynomial in $1/t^2$, $f$ will have the Laurent expansion:
\bea
\label{eqf}
 f(t)=\sum_{n=0}^{\infty}f_nt^{-2n}, \quad \textup{with} \quad  f_0=1.
 \eea
The substitution of $f$ from \eqref{eqf} in \eqref{eqtwo}  then equating coefficients of various powers of $t$ implies that \bea \notag
f(t)=A_q(t^{-2}), \eea where $A_q(z)$ is the Ramanujan function \cite{Ram} which plays the role of  Airy function, as pointed out in
\cite{ismail2005}. The $q$-Airy function $A_q$  satisfies $q$-difference equation:
\bea
\label{eqstarx} A_q(z)=A_q(qz)-qzA_q(q^2z),
\quad A_q(0)=1 \eea
with power series expansion \bea \label{eqDefAq} A_q(z)= 1 + \sum_{k=1}^{\infty}\frac{q^{k^2}(-z)^k}{(1-q)(1-q^2)\cdots (1-q^k)}. \eea

The purpose of this paper is to justify the  above procedure  for a more general class of orthogonal polynomials. More precisely  we consider
 the family of orthogonal polynomials $\{p_n(x,c)\}$ defined by the recurrence relations
\begin{equation}\label{eqqhermite3}
2xp_n(x,c)=p_{n+1}(x,c)+q^{-nc}\beta_n(q,c)p_{n-1}(x,c),
\end{equation}
where $c>0$ and $\beta_n(q,c)=1+ o(1)$ as $n\to\infty$. We will also
remark on the case when $\bt_n (q,c)= 1+
\sum_{j=1}^{\infty}a_j(c)q^{\lambda_j n}$ for a sequence
$\{\lambda_j\}$ with $\la_{j+1} > \la_j >  0, j=1,2,3,...$, in Section~5. This
generalizes the $q^{-1}$-Hermite model. A second result
generalizes the Stieltjes-Wigert and $q$-Laguerre models. We
consider the three term recurrence relation
\begin{equation}\label{sw} xq^{2n+\alpha+1}p_n(x)=a_np_{n+1}(x)+b_np_{n-1}(x)+c_np_n(x),
\end{equation}
where, modulo linear scaling of $x$ by $ax+b$,
\begin{equation}\label{an}
a_n=1+\sum_{k=1}^{\infty}a_{n,k}q^{\alpha_k n}, ~~ b_n=1+\sum_{k=1}^{\infty}b_{n,k}q^{\beta_k n}, ~~ c_n=1+\sum_{k=1}^{\infty}c_{n,k}q^{\gamma_k n},
\end{equation}
where $\alpha_k>0$, $\beta_k>0$, and $\gamma_k>0$ for all $k>0$. We will see that this recurrence includes both the $q$-Laguerre and Stieltjes-Wigert
polynomials as special cases in next section where we will also introduce a one parameter family of solutions to the $q$-difference equation in \eqref{eqstarx}.

\section{Main Results}
As in the case of \eqref{eqqhermite} we transform  the polynomials  $p_n(x,c)$ to the functions $s_n(t)$ defined as
$s_n(t)=q^{n^2/2}t^{-n}p_n(x_n(t),c)$ with $x_n(t)$ given by \eqref{eqxnt}. Thus  $s_{-1}(t)=0$, $s_0(t)=1$, and
\begin{equation}\label{eqone}
\left(1-\frac{q^n}{t^2}\right)\, s_n(t)=s_{n+1}(q^{1/2}t)+\frac{1}{t^2}c(q,n)s_{n-1}(q^{-1/2}t),~n \geq  0,
\end{equation}
where $|c(q,n)|\leq K$ and, for each $q$,
$\lim_{n\to\infty}c(q,n)=1$.

Note that $\{s_n(t)\}$ are polynomials in $1/t^2$ of degree $\lfloor{n/2}\rfloor$.

Our main result is the proof of the convergence of $\{s_n(t)\}$. We remark that the recurrence \eqref{eqone} indeed
includes the recurrence relation of the $q^{-1}$-Hermite \eqref{eqhermite2} as a special case.

\bigskip

\begin{thm}\label{thm1}  Let $\{s_n(t)\}$ be defined by the recurrence given by
\eqref{eqone} with $s_{-1}(t)=0$ and $s_0(t)=1$. The limit $\lim_{n\to\infty}s_n(t)$ exists and
the convergence is locally uniform for $t\in \overline{{\mathbb C}}\setminus \{0\}$.
\end{thm}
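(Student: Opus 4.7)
The plan is to work in the expansion of $s_n$ in $1/t^2$ at $t = \infty$, convert the functional recurrence \eqref{eqone} into an infinite system of scalar first-order linear recurrences for the coefficients, and then apply a Weierstrass M-test on each half-neighborhood $\{|t| \ge \rho\}$ of $\infty$. Writing
\[
s_n(t) = \sum_{k \ge 0} a_n^{(k)}\, t^{-2k}, \qquad a_n^{(0)} = 1,
\]
(a finite sum, since $s_n$ is a polynomial in $1/t^2$), with $a_0^{(k)} = 0$ for $k \ge 1$ and $a_{-1}^{(k)} = 0$ for all $k$, I would substitute into \eqref{eqone} and equate coefficients of $t^{-2k}$ to obtain
\[
a_{n+1}^{(k)} = q^k a_n^{(k)} - q^{n+k} a_n^{(k-1)} - c(q,n)\, q^{2k-1}\, a_{n-1}^{(k-1)}, \quad k \ge 1.
\]
This is a first-order linear recurrence in $n$ with contracting multiplier $q^k$ and a forcing determined by the previously solved level $a_{\,\cdot}^{(k-1)}$.

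Next I would prove by induction on $k$ that there exists a sequence $A_k > 0$ with $|a_n^{(k)}| \le A_k$ for all $n$ and $A_k$ decaying super-exponentially in $k$, and simultaneously that $a_n^{(k)} \to a^{(k)}$ as $n \to \infty$. Given $|a_m^{(k-1)}| \le A_{k-1}$ for all $m$, iterating the first-order recurrence and using $a_0^{(k)} = 0$ together with $|c(q,n)| \le K$ yields
\[
|a_n^{(k)}| \le A_{k-1} \sum_{j=0}^{n-1} q^{(n-1-j)k} \bigl(q^{j+k} + K q^{2k-1}\bigr),
\]
and splitting the two terms in the bracket gives a bound $A_k$ with $A_k/A_{k-1}$ of order $q^k$; hence $A_k \lesssim q^{k(k+1)/2}$. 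The convergence $a_n^{(k)} \to a^{(k)}$ follows from the standard lemma for contracting affine recurrences, since the forcing tends to $-q^{2k-1} a^{(k-1)}$ by the inductive hypothesis; one reads off $a^{(k)} = -q^{2k-1} a^{(k-1)}/(1-q^k)$, which iterates to $a^{(k)} = (-1)^k q^{k^2}/(q;q)_k$.

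To finish, fix any $\rho > 0$. On $\{|t| \ge \rho\}$ each term satisfies $|a_n^{(k)} t^{-2k}| \le A_k/\rho^{2k}$, and $\sum_k A_k/\rho^{2k} < \infty$ by the super-exponential decay of $A_k$. The Weierstrass M-test then permits exchanging the limit $n \to \infty$ with the sum over $k$, so $s_n(t) \to \sum_k a^{(k)} t^{-2k} = A_q(t^{-2})$ uniformly on $\{|t| \ge \rho\}$; since every compact subset of $\overline{\mathbb{C}} \setminus \{0\}$ lies inside some such set, the convergence is locally uniform on $\overline{\mathbb{C}} \setminus \{0\}$ and the limit matches the heuristic derivation of \eqref{eqtwo}--\eqref{eqDefAq}.

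I expect the main obstacle to be the uniform bound $A_k$. Unlike the $q^{-1}$-Hermite case, where $c(q,n) = 1-q^n$ decays exponentially, the hypothesis here gives only $c(q,n) \to 1$ with $|c(q,n)| \le K$, so the forcing term $c(q,n)\, q^{2k-1}\, a_{n-1}^{(k-1)}$ in the coefficient recurrence does not decay in $n$ at all. Closing the induction therefore requires exploiting the homogeneous contraction $q^k < 1$ to absorb a bounded but non-decaying perturbation, producing the factor $q^{2k-1}/(1-q^k)$ that both drives the super-exponential decay of $A_k$ and secures the summability needed for the M-test.
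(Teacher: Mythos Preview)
Your proposal is correct, and it takes a genuinely different route from the paper's proof. The paper first establishes a uniform bound on $|s_n(t)|$ for $|t|\ge 1$ by an elegant majorant trick: it shows by induction, using the $q$-difference equation \eqref{eqstarx} for $A_q$, that $|s_n(t)|\le \prod_{k=0}^n(1+q^k)\,A_q(-K/|t|^2)$ (Lemma~\ref{lem2}), then propagates this bound inward via the recurrence to get local uniform boundedness on all of $\overline{\mathbb C}\setminus\{0\}$ (Lemma~\ref{lem1}). Normality then yields only a crude uniform bound $|a_{n,k}|\le M$, after which the paper runs a separate Cauchy argument on the coefficient recurrence to get $a_{n,k}\to f_k$, and closes with a normal-family subsequence argument. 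By contrast, you bypass all the function-theoretic machinery: you iterate the scalar recurrence for $a_n^{(k)}$ directly to obtain a sharp $k$-dependent bound $A_k$ with $A_k/A_{k-1}=O(q^k)$, and this super-exponential decay lets the $M$-test do the work that the paper delegates to Montel's theorem. Your approach is more elementary and simultaneously identifies the limit as $A_q(t^{-2})$; the paper's majorant technique, on the other hand, is more robust when the coefficient recurrence is less tractable---indeed the authors reuse it in Theorem~\ref{thm2} with a different majorant $f^b$ coming from the functional equation \eqref{eqfunctional}.
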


By the discussion immediately following \eqref{eqhermite2}, we have the following corollary.

\begin{cor} Let $\{s_n(t)\}$ be defined by the recurrence given by \eqref{eqone}. We have
$$\lim_{n\to\infty}s_n(t)=A_q(\frac{1}{t^2})$$
locally uniform for $t\in \overline{{\mathbb C}}\setminus \{0\}$.
\end{cor}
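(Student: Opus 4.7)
The plan is to expand $s_n$ as a (finite) Laurent series
$$s_n(t) = \sum_{k\ge 0} a_{n,k}\, t^{-2k}, \qquad a_{0,0}=1, \quad a_{-1,k}=0,$$
substitute this into the recurrence \eqref{eqone}, and equate coefficients of $t^{-2m}$. Solving for $a_{n+1,m}$ produces the coefficient recursion
$$a_{n+1,m} \;=\; q^m\, a_{n,m} \;-\; q^{n+m}\, a_{n,m-1} \;-\; c(q,n)\, q^{2m-1}\, a_{n-1,m-1}.$$
For $m=0$ this reduces to $a_{n,0}=1$ for every $n$. For $m\ge 1$ it is a first-order linear recursion in $n$ with contraction rate $q^m<1$ whose inhomogeneity depends only on coefficients at the previous level $m-1$. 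This triangular structure is the key observation, and it reduces the theorem to a single induction on $m$.

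The inductive hypothesis at level $m-1$ will be a uniform-in-$n$ bound $|a_{n,m-1}|\le M_{m-1}$ together with a limit $\alpha_{m-1}:=\lim_{n\to\infty} a_{n,m-1}$. Using $|c(q,n)|\le K$ and $q^{n+m}\to 0$, the inhomogeneous term is bounded in absolute value by $(1+K)\,q^{m-1}\,M_{m-1}$ and converges to $-q^{2m-1}\alpha_{m-1}$. The elementary analysis of a scalar recursion $x_{n+1}=q^m x_n + g_n$ with bounded convergent $g_n$ then delivers both the uniform bound $|a_{n,m}|\le M_m := (1+K)\,q^{m-1}\,M_{m-1}/(1-q^m)$ and the limit $a_{n,m}\to \alpha_m := -q^{2m-1}\alpha_{m-1}/(1-q^m)$, closing the induction. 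Iterating the $M_m$-recursion from $M_0=1$ gives the explicit majorant
$$M_m \;\le\; \frac{(1+K)^m\, q^{m(m-1)/2}}{(q;q)_m},$$
and a parallel computation for $\alpha_m$ returns $\alpha_m=(-1)^m q^{m^2}/(q;q)_m$, i.e.\ the Taylor coefficients of $A_q(1/t^2)$.

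To pass to locally uniform convergence on $\overline{\C}\setminus\{0\}$, I would fix a compact set $E$ and choose $\d>0$ with $|t|\ge \d$ on $E$. The Gaussian factor $q^{m(m-1)/2}$ dominates the growth $\d^{-2m}$ for every fixed $\d>0$, so $\sum_m M_m \d^{-2m}$ converges and provides a uniform envelope for $|s_n(t)|$ on $E$ that is independent of $n$. Splitting the Laurent series into a finite head (in which $a_{n,m}\to \alpha_m$ by the induction) and a tail controlled by the majorant upgrades the coefficientwise convergence to uniform convergence of $s_n$ to $f(t)=\sum_{m\ge 0}\alpha_m t^{-2m}$ on $E$; the point $t=\infty$ is trivial since $s_n(\infty)=1$ for all $n$. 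The only genuinely delicate step, and the real obstacle, is keeping the source-term estimate tight enough for the contraction factor $1-q^m$ to survive at every level of the induction, because this is precisely the mechanism that produces the Gaussian $q^{m(m-1)/2}$, without which the Laurent series would fail to be summable for small $|t|$.
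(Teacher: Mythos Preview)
Your argument is correct and is a genuinely different route from the paper's. The paper proves Theorem~\ref{thm1} first and then deduces the corollary from the discussion after \eqref{eqhermite2}: it establishes that $\{s_n\}$ is a normal family on $\overline{\C}\setminus\{0\}$ via Lemmas~\ref{lem1} and~\ref{lem2} (the latter bounds $|s_n(t)|$ by a constant times $A_q(-K/|t|^2)$ through an induction on $n$ that exploits the functional equation \eqref{eqstarx}), then uses normality to conclude that all coefficients $a_{n,k}$ are bounded by a \emph{single} constant $M$, runs a Cauchy-sequence argument on the coefficient recursion \eqref{3stars} to get convergence of each $a_{n,k}$, and finally identifies the limit by passing to the functional equation \eqref{eqtwo} and solving it. Your approach bypasses normal families entirely: the same coefficient recursion is used, but you run the induction on the level $m$ so that the uniform bound $M_m$ and the limit $\alpha_m$ are produced simultaneously, and the contraction factor $q^m$ in the scalar recursion $a_{n+1,m}=q^m a_{n,m}+g_n$ yields the explicit Gaussian majorant $M_m\le (1+K)^m q^{m(m-1)/2}/(q;q)_m$ directly. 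This buys you two things the paper's proof does not: an explicit envelope $\sum_m M_m|t|^{-2m}$ that is finite for every $t\ne 0$ (so locally uniform convergence follows from dominated convergence rather than compactness), and the closed form $\alpha_m=(-1)^m q^{m^2}/(q;q)_m$, which identifies the limit as $A_q(1/t^2)$ without invoking the functional equation or a uniqueness argument for its solution. The paper's approach, in turn, is more structural: Lemma~\ref{lem2} is an elegant use of \eqref{eqstarx} as a comparison device, and the normal-family framework separates the existence of the limit from its identification, which is the pattern reused in the proof of Theorem~\ref{thm2}.
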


The main ideas used in the proof of Theorem~\ref{thm1} for the treatment of the family that generalizes the $q^{-1}$-Hermite polynomials can be
adapted to the family of recurrence relations (i.e., (\ref{sw})) which include $q$-Laguerre and Stieltjes-Wigert polynomials as special cases. Due to
the nature of non-symmetry, the implementation of these ideas become lengthier in the new situation. To transform (\ref{sw}), set
$$x=x_n(t)=q^{-2n-\alpha}t$$ and define $S_n(t)=q^{n^2}(-t)^{-n}p_n(x_n(t))$. Then the recurrence relation (\ref{sw}) can be written as
\begin{equation}\label{eqsw2}
S_n(t)=(1-q^{n+1})S_{n+1}(q^2t)+(1+q)c(q,n)(qt)^{-1}S_n(t)+qd(q,n)t^{-2}S_{n-1}(q^{-2}t),
\end{equation}
where $|c(q,n)|\leq K$ and $|d(q,n)|\leq K^2$ with
$\lim_{n\to\infty}c(q,n)= \lim_{n\to\infty}d(q,n)=1$.  It is clear
that $S_n(t)$ is a polynomial in $1/t$ with \bea \label{eqsw6}
S_n(\infty) = 1/(q;q)_n. \eea

To see that $q$-Laguerre polynomials $\{L_n^{(\alpha )}(x;q)\}$
and Stieltjes-Wigert polynomials $\{S_n(x;q)\}$ are special cases
of (\ref{sw}) and (\ref{an}), we recall that $\{L_n^{(\alpha
)}(x;q)\}$ is generated by \bea \label{ql}
\begin{gathered}
-xq^{2n+\alpha+1}L_n^{(\alpha )}(x;q)=(1-q^{n+1})L_{n+1}^{(\alpha
)}(x;q)+q(1-q^{n+\alpha})L_{n-1}^{(\alpha)}(x;q) \\
-[(1-q^{n+1})+q(1-q^{n+\alpha})]
L_n^{(\alpha)}(x;q)
\end{gathered}
\eea
with initial conditions
$$L_0^{(\alpha)}(x;q)=1,~~L_1^{(\alpha)}(x;q)=\frac{1-q^{\alpha+1}-xq^{\alpha+1}}{1-q};$$
while the Stieltjes-Wigert polynomials $\{S_n(x;q)\}$ are defined
by
\begin{equation}\label{sw0}
-xq^{2n+1}S_n(x;q)=(1-q^{n+1})S_{n+1}(x;q)-[1+q-q^{n+1}]S_n(x;q)+qS_{n-1}(x;q),
\end{equation}
with the initial conditions $$S_0(x;q)=1,
~~S_1(x;q)=\frac{1-qx}{1-q}.$$ Recall that
$(a;q)_{\infty}=(1-a)(1-aq)\cdots (1-aq^{n-1})\cdots
=\prod_{n=0}^{\infty}(1-aq^{n-1}).$
\begin{thm} \label{thm2}
Let $\{S_n(t)\}$ be defined by \eqref{eqsw2} with initial conditions $S_0(t)=1$ and $S_1(t)=[1-q(1+q)c(q,1)]/t)/(1-q)$. Then the limit
$\lim_{n\to\infty}S_n(t)$ exists and
$$ \lim_{n\to\infty} S_n(t)= \frac{1}{(q;q)_\infty}A_q(\frac{1}{t}),$$ locally uniform for $t\in \overline{{\mathbb C}}\setminus \{0\}$.
\end{thm}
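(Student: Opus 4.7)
The strategy parallels the treatment of Theorem~\ref{thm1}, but the non-symmetry of \eqref{eqsw2}---with $S_n(t)$ appearing on both sides and asymmetric weights on the forward and backward shifts---makes the coefficient book-keeping lengthier. By \eqref{eqsw6}, $S_n(t)$ is a polynomial in $1/t$ of degree $n$ with constant term $1/(q;q)_n$, so I expand
$$ S_n(t) = \sum_{k=0}^{n} a_{n,k}\, t^{-k}, \qquad a_{n,0} = 1/(q;q)_n, $$
substitute into \eqref{eqsw2}, and equate coefficients of $t^{-k}$ to obtain
$$ a_{n+1,k} = \frac{q^{2k}}{1-q^{n+1}}\Bigl[\, a_{n,k} - \frac{1+q}{q}\, c(q,n)\, a_{n,k-1} - q^{2k-3}\, d(q,n)\, a_{n-1,k-2}\,\Bigr], $$
with the convention $a_{n,j}=0$ for $j<0$. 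For $k=0$ this recovers $a_{n,0}=1/(q;q)_n$; for $k\geq 1$ the prefactor $q^{2k}/(1-q^{n+1})$ tends to $q^{2k}<1$ as $n\to\infty$, and this contractivity is what will drive convergence.

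Next I identify the candidate limit formally. If $S_n(t)\to S(t)$ locally uniformly on $\overline{\mathbb C}\setminus\{0\}$, then passing to the limit in \eqref{eqsw2} (using $1-q^{n+1}\to 1$ and $c(q,n),d(q,n)\to 1$) yields
$$ S(t) = S(q^2 t) + \frac{1+q}{qt}\, S(t) + \frac{q}{t^2}\, S(q^{-2} t). $$
I claim $S(t) = (q;q)_\infty^{-1} A_q(1/t)$ solves this. With $u=1/t$ the identity to verify is
$$ \bigl(1 - (1+q)u/q\bigr)\, A_q(u) = A_q(u/q^2) + q u^2 A_q(q^2 u). $$
Applying \eqref{eqstarx} with $z=u/q$ and with $z=u/q^2$ lets me rewrite $A_q(u/q^2) = (1-u/q)\, A_q(u) - u\, A_q(qu)$, and after substitution the claim collapses to $A_q(u) = A_q(qu) - qu\, A_q(q^2 u)$, which is precisely \eqref{eqstarx}. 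The prefactor $1/(q;q)_\infty$ is pinned by matching $a_{n,0}\to 1/(q;q)_\infty$ at $t=\infty$ with $A_q(0)=1$.

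Convergence of the coefficients then follows by induction on $k$. The base case $k=0$ is explicit. Assuming $a_{n,j}\to\alpha_j$ for $j<k$, the recurrence above takes the form $a_{n+1,k} = \lambda_n a_{n,k} + g_n$ with $\lambda_n=q^{2k}/(1-q^{n+1})\to q^{2k}\in(0,1)$ and $g_n$ convergent; a standard first-order argument yields $a_{n,k}\to \alpha_k$, and a direct computation verifies
$$ \alpha_k = \frac{(-1)^k q^{k^2}}{(q;q)_\infty (q;q)_k}, $$
which is the Taylor coefficient of $(q;q)_\infty^{-1} A_q(1/t)$.

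The main obstacle is upgrading coefficient-wise convergence to locally uniform convergence on $\overline{\mathbb C}\setminus\{0\}$. For this I need an $n$-uniform majorant: bounds $|a_{n,k}|\leq M_k$ with $\sum_k M_k r^{-k}<\infty$ for every $r>0$, so that dominated convergence controls $\sum_k a_{n,k} t^{-k}$ uniformly on $|t|\geq r$ (the value at $t=\infty$ is handled separately by $a_{n,0}\to 1/(q;q)_\infty$). Using the hypotheses $|c(q,n)|\leq K$ and $|d(q,n)|\leq K^2$, a second induction on $k$---exploiting the same contractive factor $q^{2k}/(1-q^{n+1})$ to absorb the growth from the two forcing terms---should yield bounds of the form $|a_{n,k}|\leq C\, q^{k^2}/((q;q)_\infty (q;q)_k)$ for some $C$ independent of $n$. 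The Gaussian-type decay $q^{k^2}$ then guarantees the majorant converges on every compact subset of $\overline{\mathbb C}\setminus\{0\}$, and Theorem~\ref{thm2} follows.
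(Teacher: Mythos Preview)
Your approach is correct and genuinely different from the paper's on the key boundedness step. The paper never manipulates the coefficient recurrence directly to produce a majorant; instead it builds a \emph{comparison function}. It first solves the functional equation \eqref{eqfunctional} with the sign flipped ($a=-1$), obtaining an entire function $f^b(1/z)$ with positive Taylor coefficients (Proposition~\ref{prop1}(ii)), and then shows by induction on $n$ that $|S_n(t)|\le f^b(|t|/K)/(q;q)_n$ for $|t|\ge 1$, using the functional equation of $f^b$ to close the induction exactly as \eqref{eqstarx} was used in Lemma~\ref{lem2}. From there a Lemma~\ref{lem1}-type argument extends the bound inward, giving the normal family, and the coefficient-convergence step is handled as in Theorem~\ref{thm1}.

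Your route---a first-order iteration $a_{n+1,k}=\lambda_n a_{n,k}+g_n$ with $\lambda_n\to q^{2k}<1$, used both for convergence of each $a_{n,k}$ and, by induction on $k$, for the majorant $M_k$---bypasses the need for the $a=-1$ comparison function and is more elementary. Two remarks on the details: (i) The contractive factor $q^{2k}/(1-q^{n+1})$ can exceed $1$ for small $n$ when $q$ is near $1$, so the bound does not follow from one-step contraction; you must iterate from $n=k-1$ (where $a_{k-1,k}=0$) and use $\prod_{l=m}^{n-1}\mu_l\le q^{2k(n-m)}/(q;q)_\infty$ together with the geometric sum $\sum_{m}q^{2k(n-m)}\le q^{2k}/(1-q^{2k})$. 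This yields $M_k\le C_0\,q^{2k}\bigl[M_{k-1}+q^{2k}M_{k-2}\bigr]$ with $C_0$ bounded in $k$, from which $M_k\le C\rho^k q^{k^2}$ follows for suitable $\rho$---enough for dominated convergence on every $|t|\ge r$, though not quite the specific constant you wrote. (ii) Your direct verification that $A_q(1/t)$ solves the limiting equation is exactly the content of Proposition~\ref{prop1}(i), just inlined. The paper's comparison-function device is slicker and transparently parallel to Lemma~\ref{lem2}; your approach is more self-contained and avoids introducing the auxiliary $a=-1$ equation.
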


We have seen that $y=A_q(z)$ satisfies  the functional equation
\bea \label{eqqdiff} y(z) -y(qz) +qzy(q^2z) =0. \eea It is worth
noting that the family of functions \bea F_q(z;s) :=   e^{i\pi s}
\sum_{n=-\infty}^\infty \frac{(-1)^n q^{(n+s)^2}}{(q;q)_{n+s}}
z^{n+s} \eea are solutions to \eqref{eqqdiff} for all $s$.  The
notation used in the above equation is (cf. \cite{Gas:Rah,Ism})
\bea \label{qq} (a;q)_b = \frac{(a;q)_\infty}{(aq^b;q)_\infty}.
\eea Observe that $F_q(z;s)$ is periodic in $s$ of period 1, so
there is no loss of generality in taking $s$ in the strip  Re$(s)
\in [0, 1)$. Indeed, $A_q(z) = F_q(z;0)$.

\section{Proof of Theorem~\ref{thm1}}

Let $\{s_n(t)\}$ be given by the recurrence relation \eqref{eqone} with $s_{-1}(t)=0$ and $s_0(t)=1$. We will need the following lemma for the proof of Theorem~\ref{thm1}.

\begin{lem}\label{lem1}   If there exist constants $\rho
>0$ and $M>0$ such that
\begin{equation}\label{three}
\max_{|t|=\rho}|s_n(t)|\leq M
\end{equation}
for all $n\geq 0$, then $ \{s_n(t)\}$ is a normal family on
$\overline{{\mathbb C}}\setminus \{0\}$.
\end{lem}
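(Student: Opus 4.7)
The plan is to bootstrap the single-circle bound in the hypothesis into uniform bounds on compact subsets of $\overline{\mathbb{C}} \setminus \{0\}$, from which normality follows by Montel's theorem. The starting observation is that $s_n$ is a polynomial in $u := 1/t^2$ (of degree $\lfloor n/2 \rfloor$), so it extends to a function holomorphic on $\overline{\mathbb{C}} \setminus \{0\}$ with finite value at $t = \infty$. Applying the maximum modulus principle to the polynomial in $u$ on the closed disc $|u| \leq 1/\rho^2$, the hypothesis immediately yields
$$ |s_n(t)| \leq M \quad \text{for all } |t| \geq \rho \text{ and all } n, $$
so the family is already uniformly bounded on $\{|t| \geq \rho\} \cup \{\infty\}$.

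To extend the bound inward, I would prove by induction on $j \geq 0$ that there exist constants $M_j$ (depending on $j, \rho, K, M$ but not on $n$) with
$$ \max_{|t|=q^{j/2}\rho} |s_n(t)| \leq M_j \quad \text{for all } n \geq 0, $$
taking $M_0 := M$, and $M_{-1} := M$ (the latter is legitimate by the previous paragraph, since the radius $q^{-1/2}\rho$ exceeds $\rho$). For the inductive step I would rearrange \eqref{eqone} as
$$ s_{n+1}(q^{1/2}t) = \Bigl(1 - \frac{q^n}{t^2}\Bigr) s_n(t) - \frac{c(q,n)}{t^2}\, s_{n-1}(q^{-1/2}t) $$
and evaluate on $|t| = q^{j/2}\rho$. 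Since $|q^{1/2}t| = q^{(j+1)/2}\rho$ and $|q^{-1/2}t| = q^{(j-1)/2}\rho$, the triangle inequality combined with $|c(q,n)| \leq K$ and the inductive hypothesis at indices $j$ and $j-1$ yields a bound of the form
$$ \max_{|t|=q^{(j+1)/2}\rho} |s_{n+1}(t)| \leq \bigl(1 + q^{-j}/\rho^2\bigr) M_j + \bigl(K q^{-j}/\rho^2\bigr) M_{j-1}, $$
which defines $M_{j+1}$ and closes the induction (the cases $n = 0$ and $n = 1$ are handled separately using $s_0 \equiv 1$ and $s_1(t) = 1 - q/t^2$).

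For any compact subset $E \subset \overline{\mathbb{C}} \setminus \{0\}$, choose $j$ large enough that $E \subset \{|t| \geq q^{j/2}\rho\} \cup \{\infty\}$. Since $s_n$ is holomorphic on the annulus $\{q^{j/2}\rho \leq |t| \leq \rho\}$, the maximum modulus principle on this annulus combined with the exterior bound gives $|s_n(t)| \leq \max(M, M_j)$ throughout the closure of $\{|t| \geq q^{j/2}\rho\}$, in particular on $E$; Montel's theorem then yields normality. The delicate point will be the inductive step: the hypothesis provides only an upper bound on $c(q,n)$, so the recurrence must be solved for $s_{n+1}$ rather than $s_{n-1}$ to avoid dividing by $c(q,n)$, and consequently the constants $M_j$ grow with $j$. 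This growth is harmless for the present purpose, as only finitely many radii $q^{j/2}\rho$ are needed to cover any prescribed compact set.
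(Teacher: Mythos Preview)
Your proposal is correct and follows essentially the same route as the paper: use the maximum modulus principle to extend the hypothesis to $|t|\ge\rho$, then iterate the recurrence \eqref{eqone} (solved for $s_{n+1}$) to push the uniform bound inward to circles $|t|=q^{j/2}\rho$, and conclude normality from local uniform boundedness. The only cosmetic difference is that the paper uses the exterior bound at each step so a single constant $M_k$ (given as a product $M\prod_{j=1}^k(1+(1+K)/(\rho^2 q^{j-1}))$) suffices, whereas you carry two previous constants $M_j$ and $M_{j-1}$; either bookkeeping works.
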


\begin{proof} Since $s_n(t)$ are polynomials in $1/t^2$,
they are all analytic on $\overline{{\mathbb C}}\setminus \{0\}$. So, by the maximal modulus principle for analytic functions, (\ref{three})
implies that
\begin{equation}\label{four}
\max_{|t|\geq\rho}|s_n(t)|\leq M
\end{equation}
for all $n\geq 0$. Next, we show that (\ref{four}) implies that
\begin{equation}\label{five}
\max_{|t|\geq q^{1/2}\rho}|s_n(t)|\leq (1+\frac{1+K}{\rho^2})M
\end{equation}
for all $n\geq 2$.

Indeed, from the recurrence relation \eqref{eqone},
$$
s_{n+1}(q^{1/2}t)=
\left(1-\frac{q^n}{t^2}\right)\; s_n(t)-\frac{1}{t^2}c(q,n)s_{n-1}(q^{-1/2}t).
$$
Thus, using (\ref{four}), for $|t|=\rho$, we have
$$
|s_{n+1}(q^{1/2}t)|\leq  \left(1+\frac{1}{|t|^2} \right)\; M+
\frac{K}{|t|^2}M= \left(1+\frac{1+K}{|t|^2}\right) \; M,
$$
for $n\geq 1$. This proves (\ref{five}). Repeating the argument above, we obtain that, for $k=1,2,...$, we have
\begin{equation}\label{six}
\max_{|t|\geq q^{k/2}\rho} |s_n(t)|\leq M\prod_{j=1}^k(1+\frac{1+K}{\rho^2q^{j-1}})
\end{equation}
for $n\geq k+1$.

Since $\lim_{k\to\infty}q^{k/2}\rho=0$, we see that the sequence
$\{s_n(t)\}$ is uniformly bounded on compact subsets of
$\overline{{\mathbb C}}\setminus \{0\}$.
\end{proof}

We need one more auxiliary result for the proof of Theorem~\ref{thm1}.

 \begin{lem}\label{lem2} We have, for $|t|\geq 1$,
\begin{equation}\label{seven}
|s_n(t)|\leq \prod_{k=0}^n(1+{q^k}) \/ A_q(-\frac{K}{|t|^2})
\end{equation}
for all $n\geq 0$.
\end{lem}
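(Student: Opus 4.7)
The plan is to prove the bound by induction on $n$, using a rewritten form of \eqref{eqone} that expresses $s_{n+1}(t)$ in terms of $s_n$ and $s_{n-1}$ evaluated at arguments of larger modulus. Substituting $t \mapsto q^{-1/2}t$ in \eqref{eqone} gives
\begin{equation*}
s_{n+1}(t) = \Bigl(1 - \frac{q^{n+1}}{t^2}\Bigr)\, s_n(q^{-1/2}t) - \frac{q\,c(q,n)}{t^2}\, s_{n-1}(q^{-1}t),
\end{equation*}
and if $|t| \geq 1$ then $|q^{-1/2}t|$ and $|q^{-1}t|$ are also $\geq 1$, so the induction hypothesis will apply to the right-hand side.

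The base cases are immediate: $|s_0(t)|=1$, and solving \eqref{eqone} at $n=0$ gives $s_1(t)=1-q/t^2$, hence $|s_1(t)|\leq 1+q$ on $|t|\geq 1$. Both are dominated by the claimed bound, since $A_q(-K/|t|^2)\geq 1$ when $|t|\geq 1$: all coefficients in \eqref{eqDefAq} are non-negative at non-positive arguments. For the inductive step, write $C_n:=\prod_{k=0}^{n}(1+q^k)$. Applying the induction hypothesis to the shifted recurrence yields
\begin{equation*}
|s_{n+1}(t)| \leq \Bigl(1 + \frac{q^{n+1}}{|t|^2}\Bigr) C_n\, A_q\!\Bigl(-\frac{qK}{|t|^2}\Bigr) + \frac{qK}{|t|^2}\, C_{n-1}\, A_q\!\Bigl(-\frac{q^2K}{|t|^2}\Bigr).
\end{equation*}
The crucial step is to invoke the $q$-difference equation \eqref{eqstarx} satisfied by $A_q$, which with $z=-K/|t|^2$ reads
\begin{equation*}
A_q\!\Bigl(-\frac{K}{|t|^2}\Bigr) = A_q\!\Bigl(-\frac{qK}{|t|^2}\Bigr) + \frac{qK}{|t|^2}\, A_q\!\Bigl(-\frac{q^2K}{|t|^2}\Bigr).
\end{equation*}
Multiplying this identity by $C_{n+1}=(1+q^{n+1})C_n$ and subtracting the previous upper bound gives
\begin{equation*}
C_{n+1} A_q\!\Bigl(-\tfrac{K}{|t|^2}\Bigr) - |s_{n+1}(t)| \geq C_n q^{n+1}\Bigl(1-\tfrac{1}{|t|^2}\Bigr) A_q\!\Bigl(-\tfrac{qK}{|t|^2}\Bigr) + (C_{n+1}-C_{n-1})\,\tfrac{qK}{|t|^2}\, A_q\!\Bigl(-\tfrac{q^2K}{|t|^2}\Bigr),
\end{equation*}
and each term on the right is non-negative: both bracketed coefficients are $\geq 0$ when $|t|\geq 1$ (the second by monotonicity of $C_n$), and the $A_q$ values themselves are non-negative because their arguments are $\leq 0$.

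The only real subtlety I foresee is arranging the dilations of $t$ in the recurrence so that they align exactly with the shifts $z\mapsto qz,\, q^2z$ appearing in the $q$-difference equation for $A_q$; the factor $q^{n+1}/t^2$ picked up by the shift $t\mapsto q^{-1/2}t$ is precisely what cancels against the corresponding term generated by the $q$-difference equation, and without this cancellation one would be forced to make up the difference using the (negligible) $A_q(-q^2K/|t|^2)$ term. Once this alignment is noted, the rest of the argument is an elementary positivity check requiring no delicate estimates.
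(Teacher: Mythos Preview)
Your proof is correct and follows essentially the same approach as the paper: induction on $n$ via the shifted recurrence $s_{n+1}(t)=(1-q^{n+1}/t^2)s_n(q^{-1/2}t)-(q/t^2)c(q,n)s_{n-1}(q^{-1}t)$, followed by the $q$-difference equation \eqref{eqstarx} for $A_q$ at $z=-K/|t|^2$. The paper's version is slightly quicker in the final step, simply replacing $C_{n-1}$ and $(1+q^{n+1}/|t|^2)C_n$ by $C_{n+1}$ before invoking \eqref{eqstarx}, whereas you carry out the explicit subtraction; but the content is the same.
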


\begin{proof}
We use induction. First it is trivial to verify that (\ref{seven}) holds for $n=0$. Now, assume (\ref{seven}) is true for $n$ and let us verify that
(\ref{seven}) is also true when $n$ is replaced by $n+1$.

From \eqref{eqone}, we have
$$
s_{n+1}(t)=(1-\frac{q^{n+1}}{t^2})s_n(q^{-1/2}t)-\frac{q}{t^2}c(n,q)s_{n-1}(q^{-1}t).
$$
Thus, for $|t|\geq 1$,
\begin{eqnarray*}
|s_{n+1}(t)|&\leq&
(1+{q^{n+1}})|s_n(q^{-1/2}t)|+\frac{Kq}{|t|^2}|s_{n-1}(q^{-1}t)|\\
&\leq&(1+{q^{n+1}}) \prod_{k=0}^n(1+{q^k}) \/ A_q(-\frac{Kq}{|t|^2}) +\frac{Kq}{|t|^2} \prod_{k=0}^{n-1}(1+{q^k}) \/
A_q(-\frac{Kq^2}{|t|^2})\\
&\leq&\prod_{k=0}^{n+1}(1+{q^k}) \/ \left( A_q(-\frac{Kq}{|t|^2}) +\frac{Kq}{|t|^2}
A_q(-\frac{Kq^2}{|t|^2})\right)\\
&=&\prod_{k=0}^{n+1}(1+{q^k}) A_q(-\frac{K}{|t|^2}),
\end{eqnarray*}
where in the last equality, we have applied \eqref{eqstarx} with
$z=-K/|t|^2$. This completes the proof of the proposition.
\end{proof}

Now, we are ready for the proof of Theorem~\ref{thm1}.

\begin{proof}[Proof of Theorem~\ref{thm1}]
First, note that (i) the infinite product in (\ref{seven}) converges as $n\to\infty$, (ii) $ A_q(-\frac{K}{|t|^2})>0$, and (iii) when $|t|=1$,
$$ A_q(-\frac{K}{|t|^2})\leq A$$ for some $A>0$. Hence, with the help of Lemmas~\ref{lem1} and \ref{lem2},
we see that $\{s_n(t)\}$ is a normal family for $t\in
\overline{\mathbb C}\setminus \{0\}$.

To establish the convergence of $\{s_n(t)\}$, we study the
coefficients of $s_n(t)$. Recall that $s_n(t)$ is a polynomial of
degree $\lfloor n/2\rfloor$ in $1/t^2$. Write
\begin{equation}\label{star}
s_n(t)=\sum_{k=0}^{\lfloor n/2\rfloor}
a_{n,k}\frac{1}{t^{2k}}.\end{equation} Then, the fact that
$\{s_n(t)\}$ is a normal family implies that $\{a_{n,k}\}$ is a
bounded set. Assume that \begin{equation}\label{2asts}
|a_{n,k}|\leq M, ~~~{\rm for}~~0\leq k\leq \lfloor n/2\rfloor,
~n=0,1,2,... .
\end{equation}
Next, we show that, for every fixed $k$,
\begin{equation}\label{ast}
\lim_{n\to\infty}a_{n,k}=:f_k
\end{equation}
exists.

For $n\geq 0$, using \eqref{star} in \eqref{eqone}, we get
$$
(1-\frac{q^n}{t^2})\sum_{k=0}^{\lfloor
n/2\rfloor}a_{n,k}\frac{1}{t^{2k}}= \sum_{k=0}^{\lfloor
(n+1)/2\rfloor}a_{n+1,k}\frac{1}{t^{2k}q^k}+ \frac{1}{t^2}
c(q,n)\sum_{k=0}^{\lfloor (n-1)/2\rfloor}a_{n-1,k}\frac{q^k}{t^{2k}}
$$
Comparing the constant terms on both sides of the above equation, we
obtain
$$a_{n,0}=a_{n+1,0}, ~~~n=0,1,2,... .$$
From $s_0(t)=1$, we get $a_{0,0}=1$, so, we have
\begin{equation}
\label{2stars} a_{n,0}=1, ~~n=0,1,2,... .
\end{equation}
Next, for $k>0$, comparing the coefficients of $1/t^{2k}$, we get,
for large $n$,
$$
a_{n,k}-q^na_{n,k-1}=\frac{a_{n+1,k}}{q^k}+q^{k-1}c(q,n)a_{n-1,k-1}
$$
Or, equivalently, for large $n$,
\begin{equation}
\label{3stars}
q^ka_{n,k}-q^{n+k}a_{n,k-1}=a_{n+1,k}+q^{2k-1}c(q,n)a_{n-1,k-1}.
\end{equation}
Replacing $n$ by $n+l$ in (\ref{3stars}), we get
\begin{equation}\label{4stars}
q^ka_{n+l,k}-q^{n+l+k}a_{n+l,k-1}=a_{n+l+1,k}+q^{2k-1}
c(q,n+l)a_{n+l-1,k-1}. \end{equation} Subtracting (\ref{4stars})
from (\ref{3stars}) and re-arranging the terms, we get, for large
$n$,
$$
a_{n+1,k}-a_{n+l+1,k}=q^k(a_{n,k}-a_{n+l,k})-q^{n+k}(a_{n,k-1}-q^la_{n+l,k-1})$$
$$ ~~~~~~~~~~~~~~~~~~~~~~~~~~~
-q^{2k-1}[c(q,n)a_{n-1,k-1}-c(q,n+l)a_{n+l-1,k-1}].
$$
This, together with (\ref{2asts}) and the fact that $0<q<1$, gives
us
\begin{eqnarray}\nonumber
|a_{n+1,k}-a_{n+l+1,k}|&\leq&
q^k|a_{n,k}-a_{n+l,k}|+q^{n+k}2M+q^{2k-1}|a_{n-1,k-1}-a_{n+l-1,k-1}|
\\
\label{5stars} && +q^{2k-1}M(|c(q,n)-1|+|c(q,n+l)-1|).
\end{eqnarray}
With the help of (\ref{5stars}), we are ready to prove (\ref{ast})
by induction on $k$.

When $k=0$, equations in (\ref{2stars}) give us
\begin{equation}\label{6stars}
\lim_{n\to\infty}a_{n,0}=1.
\end{equation}

Now, assume (\ref{ast}) is true when $k$ is replaced by $k-1$.
Then, for any $\varepsilon >0$, there exists a number $N>0$ such
that
\begin{equation}\label{a-a}
|a_{n-1,k-1}-a_{n+l-1,k-1}|<\varepsilon, ~n\geq N, ~l\geq 0.
\end{equation}
From $\lim_{n\to\infty}c(q,n)=1$, we may choose $N$ large enough
to ensure that
\begin{equation}\label{c-1}
|c(q,n)-1|<\varepsilon, ~n\geq N.
\end{equation}
Thus, for $n\geq N$, (\ref{5stars}) implies
\begin{equation}
|a_{n+1,k}-a_{n+l+1,k}|\leq
q^k|a_{n,k}-a_{n+l,k}|+q^{n+k}2M+q^{2k-1}\varepsilon (1+2 M).
\label{7stars}
\end{equation}
After repeatedly using (\ref{7stars}) $m$ times, we arrive at
$$
|a_{n+1,k}-a_{n+l+1,k}|\leq
q^{(m+1)k}|a_{n-m,k}-a_{n-m+l,k}|~~~~~~~~~~~~~$$
$$~~~~~~~~~~~~~~+q^{n+k}2M \sum_{j=0}^mq^{j(k-1)}+q^{2k-1}\varepsilon
(1+2M)\sum_{j=0}^mq^{jk},
$$
$$~~~~~~~~~~~~~\leq
q^{(m+1)k}2M+q^{n+k}2M \frac{1}{1-q^{k-1}}+q^{2k-1}\varepsilon (1+2M)\frac{1}{1-q^{k}}, ~~n-m\geq N, ~l\geq 0.
$$
Take $m=\lfloor \sqrt{n}\rfloor$. Then $\lim_{n\to\infty}m=\infty$
and $\lim_{n\to\infty}(n-m)=\infty$. Therefore, for large $n$ and
for all $l\geq 0$,
$$\overline{\lim_{n\to\infty}}
|a_{n,k}-a_{n+l,k}|\leq \varepsilon
(1+2M)\frac{q^{2k-1}}{1-q^k}.$$ So,
\begin{equation}\label{9stars}
\lim_{n\to\infty}(a_{n,k}-a_{n+l,k})=0
\end{equation}
uniformly in $l\geq 0$. Thus, $\{a_{n,k}\}_{n\geq k}$ is a Cauchy
sequence. Hence, (\ref{ast}) is true for any $k=0,1,2,...$

Finally, we need to derive the convergence of $\{s_n(z)\}$.
Although this can be done by a bounded convergence argument based
on (\ref{2asts}) and (\ref{ast}), we choose the following more
elementary argument for its simplicity: Note that every
subsequence $\{s_n(t)\}_{n\in\Lambda}$ of $\{s_n(t)\}$ has a
sub-subsequence $\{s_n(t)\}_{n\in\Lambda_1}$ ($\Lambda_1\subseteq
\Lambda$) that converges locally uniformly in $\overline{\mathbb
C}\setminus \{0\}$. Assume
$$f(t):=\lim_{n\to\infty, n\in \Lambda_1}s_n(t).$$ Then $$
f^{(j)}(t)=\lim_{n\to\infty,n\in \Lambda_1}s_n^{(j)}(t).$$
Evaluating at $t=\infty$, we see that
$$f(t)=\sum_{j=0}^{\infty}f_k\frac{1}{t^{2k}}.$$
Therefore, the whole sequence $\{s_n(t)\}$ must converge to the same function $f(t)$ locally uniformly in $\overline{\mathbb C}\setminus
\{0\}$.
\end{proof}

\section{Proof of Theorem~\ref{thm2}}
We need to recall some notations from $q$-theory and solve a $q$-difference equation.
\subsection{The Functional Equation} Recall that (see (\ref{qq})) \bea (a;q)_0 = 1, \quad (a;q)_n = \prod_{j=0}^{n-1}
(1-aq^j), \eea and we note the Euler identity \cite[Page xvi, (18)]{Gas:Rah} \bea \label{eqEuler} \Sum \frac{z^n}{(q;q)_n} = \frac{1}{(z;q)_\infty},
\quad |z| < 1. \eea

We will need to solve the functional equation \bea
\label{eqfunctional} f(z) = f(zq^2) + \frac{ a q}{z^2}  f(zq^{-2})
+ a\frac{1+q}{qz} f(z), \quad a = \pm 1. \eea

\begin{prop}\label{prop1} Up to a scaling factor, the above functional equation has a solution:\\
(i) when $a=1$, $f(z)=A_q(\frac{1}{z})$, and \\
(ii) when $a=-1$, $f(1/z)$ is an entire function with positive
Taylor series coefficients. \end{prop}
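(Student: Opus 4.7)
The plan is to reduce (\ref{eqfunctional}) to a linear recursion for the Taylor coefficients of $h(w) := f(1/w)$. Under $w = 1/z$ the shifts $zq^{\pm 2}$ become $q^{\mp 2}w$, so the equation turns into a three-term $q$-difference relation involving $h(w)$, $h(q^{-2}w)$ and $h(q^2 w)$. A single further substitution $w \mapsto q^2 w$ eliminates the inward shift and yields the cleaner form
$$
h(w) = \bigl(1 - a(1+q)q\, w\bigr)\, h(q^2 w) \;-\; a\, q^5 w^2\, h(q^4 w).
$$

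Writing $h(w) = \sum_{n \ge 0} c_n w^n$ and equating coefficients of $w^n$ produces the two-term recursion
$$
(1 - q^{2n})\, c_n \;=\; -a(1+q)\, q^{2n-1}\, c_{n-1} \;-\; a\, q^{4n-3}\, c_{n-2}, \qquad n \ge 1,
$$
with the convention $c_{-1}=0$. Since $0<q<1$ forces $1-q^{2n}\ne 0$, the whole sequence is determined by $c_0$, which justifies the phrase ``up to a scaling factor''. For part (i) with $a=1$, I would verify that the Taylor coefficients of the Ramanujan function, $c_n = (-1)^n q^{n^2}/(q;q)_n$, satisfy the recursion; using $c_n/c_{n-1} = -q^{2n-1}/(1-q^n)$ and dividing through by $-q^{2n-1}$, the identity collapses to the trivial $1+q^n = (1+q) - q(1-q^{n-1})$. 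This pins down $h(w) = A_q(w)$, i.e.\ $f(z) = A_q(1/z)$.

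For part (ii) with $a = -1$, the recursion becomes
$$
(1-q^{2n})\, c_n \;=\; (1+q)\, q^{2n-1}\, c_{n-1} \;+\; q^{4n-3}\, c_{n-2},
$$
so every coefficient on the right is positive. Starting with $c_0 > 0$, which forces $c_1 = q c_0/(1-q) > 0$, an induction in $n$ gives $c_n > 0$ throughout. To show $h$ is entire I would renormalize by $d_n := c_n q^{-n^2}$; a short bookkeeping step reduces the recursion to $(1-q^{2n}) d_n = (1+q)d_{n-1} + q\, d_{n-2}$, an effectively constant-coefficient linear recurrence, so $d_n$ grows at most geometrically. Hence $c_n = O(\rho^n q^{n^2})$ for some $\rho>0$, and $\sum c_n w^n$ converges on all of $\mathbb{C}$.

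The only real obstacle is careful $q$-bookkeeping: the original equation involves three shifts $1$, $q^2$, $q^{-2}$ that interact with the substitution $w \mapsto q^2 w$, and it is easy to miscount an exponent when extracting the recursion. Once that is in hand, part (i) is an explicit verification against $A_q$ and part (ii) is a transparent positivity induction coupled with the growth estimate.
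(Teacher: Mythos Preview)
Your argument is correct. Both you and the paper begin by expanding $f(1/w)=\sum c_n w^n$ and reducing the functional equation to the same two-term recursion for the coefficients (after the normalization $d_n=c_n q^{-n^2}$ this is $(1-q^{2n})d_n=a(1+q)d_{n-1}+aqd_{n-2}$, up to sign conventions). From that point the two proofs diverge in style.

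For (i), the paper introduces the generating function $G(z)=\sum d_n z^n$, derives $G(z)=G(q^2z)/[1+a(1+q)z+aqz^2]$, factors the denominator as $(1+z)(1+qz)$ when $a=1$, iterates to get $G(z)=1/(-z;q)_\infty$, and then invokes Euler's identity to read off $d_k=(-1)^k/(q;q)_k$. You instead verify directly that the Ramanujan coefficients satisfy the recursion, which is more elementary and avoids the detour through infinite products.

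For (ii), the paper again uses the generating-function factorization $(1-z/\alpha)(1-z/\beta)$ and applies Darboux's asymptotic method to obtain the precise leading behavior $d_n\sim\beta^{-n}/[(q^2;q^2)_\infty(\beta/\alpha;q^2)_\infty]$, from which entireness follows. Your route---bounding $d_n$ geometrically from the asymptotically constant-coefficient recursion---is again more elementary and fully sufficient for the stated conclusion, though it yields only an $O$-bound rather than the exact asymptotic constant. The positivity argument is identical in both proofs.

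In short: same setup, different finish. The paper's generating-function method buys sharper asymptotics in case (ii) and a closed form in case (i); your direct approach is shorter, self-contained, and adequate for everything the proposition actually claims.
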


\begin{proof}
 Assume that $f(z) =  \Sum f_nz^{-n}$. Since $f_0 \ne 0$ is a scaling factor there is no loss of generality in assuming $f_0
=1$. By equating coefficients of $z^{-n}$ on both sides of
\eqref{eqfunctional} we see that \bea \label{eqrecurrencef} a(1-
q^{-2n})f_n = q^{2n-1} f_{n-2} + (1+q)q^{-1} f_{n-1}, \quad n >0,
\eea with $ f_1 = a q/(q-1)$. Now, in order to normalize the
coefficients in difference equation (\ref{eqrecurrencef}) above,
set $f_n = g_n q^{n^2}$. Thus $g_0 =1, g_1 = a/(q-1)$ and \bea
g_n(q^{2n}-1) = aqg_{n-2} + a (1+q) g_{n-1}, \quad n >0. \notag
\eea We now solve the above recursion using generating functions,
so we set $G(z) = \Sum g_n z^n$. In view of the initial
conditions, the functional equation for $G(z)$ is \bea
\label{eqgfunctional} G(z) = G(q^2 z)/[1+a (1+q)z + aqz^2]. \eea
If $a =1$ then the denominator in \eqref{eqgfunctional} factors as
$(1+z)(1+qz)$, and iterating \eqref{eqgfunctional} with the initial
condition $G(0) =1$ gives \bea G(z) = \frac{1}{(-z;q)_\infty} =
\sum_{k=0}^\infty \frac{(-z)^k}{(q;q)_k}, \notag \eea by the Euler
identity. In the last step we used \eqref{eqEuler}. This leads to
$f(z) = A_q(1/z)$.

On the other hand when $a=-1$, the denominator in
\eqref{eqgfunctional} factors as \bea 1-  (1+q)z - qz^2 =
(1-z/\al)(1-z/\bt), \quad \textup{with} \quad \al < 0 < \bt ~~{\rm
and}~~\bt <|\al|. \notag \eea In this case \bea G(z) =
\frac{1}{(z/\al;q^2)_\infty(z/\bt;q^2)_\infty}. \notag \eea Since
$0 < \bt < |\al|$, we see that the singularity of $G$ with the
smallest modulus is $\bt$. Applying Darboux's asymptotic method,
\cite{Olv}, we conclude that \bea g_n =
\frac{\bt^{-n}}{(q^2;q^2)_\infty(\bt/\al;q^2)_\infty}\; [1+ o(1)].
\notag \eea Thus $f_n = g_n q^{n^2}$ and $f(1/z)$ is an entire
function of $z$. Moreover the recurrence relation
\eqref{eqrecurrencef} and the initial conditions show that $f_n >
0$. Therefore the function $f(1/z)$ is an entire function
of $z$ with positive Taylor series coefficients.
\end{proof}

\begin{proof}[Proof of Theorem~\ref{thm2}]
We follow the ideas used in the proof of Theorem~\ref{thm1}. We will skip similar argument. The main task is to show that the family $\{S_n(t)\}$
defined by \eqref{eqsw2} is a normal family for $t\in \overline{{\mathbb C}}\setminus \{0\}$.

Let $f^b(t)$ be the solution with $f^b(\infty)=1$ in Proposition~\ref{prop1} when $a=-1$. We claim that for $|t|\geq 1$,
\begin{equation}\label{swproof1}
|S_n(t)|\leq \frac{f^b(|t|/K)}{\prod_{k=1}^n(1-q^k)},
\end{equation}
for $n=0,1,2,...$

Let us use induction to verify (\ref{swproof1}) for $n=0,1,2,...$
Clearly, from the initial conditions and the fact that
$f^b(|t|/K)=1+Kq/((1-q)|t|)+$(positive terms), we see that
(\ref{swproof1}) is true when $n=0,1$. Now assume that
(\ref{swproof1}) is true for $n$.

From \eqref{eqsw2}, we have
$$
(1-q^{n+1})S_{n+1}(t)=S_n(q^{-2}t)-(1+q)c(q,n)(q^{-1}t)^{-1}S_n(q^{-2}t)-q^5d(q,n)t^{-2}S_{n-1}(q^{-4}t).
$$
So, for $|t|\geq 1$,
\begin{eqnarray*}
|(1-q^{n+1})S_{n+1}(t)|&\leq &|S_n(q^{-2}t)|+\frac{(1+q)q}{|t|}|S_n(q^{-2}t)|+\frac{q^5}{|t|^{2}}|S_{n-1}(q^{-4}t)|\\
&\leq & \frac{f^b(q^{-2}|t|/K)}{\prod_{k=1}^n(1-q^k)}\left(1+\frac{K(1+q)q}{|t|}\right)+\frac{K^2q^5}{|t|^{2}}\frac{f^b(q^{-4}|t|/K)}{
\prod_{k=1}^{n-1}(1-q^k)}\\
&\leq & \frac{1}{\prod_{k=1}^n(1-q^k)}\left[\left(1+\frac{K(1+q)q}{|t|}\right) f^b(\frac{q^{-2}|t|}{K})+\frac{K^2q^5}{|t|^{2}}f^b(\frac{q^{-4}|t|}{K})\right]\\
&=&\frac{1}{\prod_{k=1}^n(1-q^k)}f^b(\frac{|t|}{K}),
\end{eqnarray*}
where in the last step, we used the functional equation
((\ref{eqfunctional}) with $a=-1$) satisfied by $f^b$ with
$z=q^{-2}|t|/K$ and the fact that $f^b(\frac{1}{z})$ has positive
Taylor series coefficients. This implies (\ref{swproof1}) holds
when $n$ is replaced by $n+1$. Therefore, by induction,
(\ref{swproof1}) is true for all $n=0,1,2,...$

The rest of the proof goes like the one for Theorem~\ref{thm1}: From the claim, we can easily show that $\{S_n(t)\}$ is a normal family for $t\in
\overline{{\mathbb C}}\setminus \{0\}$ and from here and working with the recurrence relations, we can show (by a similar argument as given in the
proof of Theorem~\ref{thm1}) that the coefficients of $S_n(t)$ converge: if $$S_n(t)=\sum_{k=0}^n a_{n,k}t^{-k}$$ then $$\lim_{n\to\infty}
a_{n,k}=f_k, ~~{\rm for}~ {\rm some}~ f_k, ~k=0,1,2,...,$$ which immediately yields the convergence of the whole sequence $\{S_n(t)\}$.

Finally, writing the limit as $f(t)$: $f(t):=\lim_{n\to\infty}S_n(t)$, and let $n\to\infty$ on both sides of
\eqref{eqsw2} to get
$$
f(t)=f(q^2t)+(1+q)(qt)^{-1}f(t)+qt^{-2}f(q^{-2}t).
$$
From this, with the fact that $\lim_{n\to\infty}S_n(\infty)=1/(q;q)_\infty$, Proposition~\ref{prop1} when $a=1$ implies that $f(t)=A_q(\frac{1}{t})/(q;q)_\infty$.
\end{proof}

\section{Error  Terms}
Unlike the asymptotics  of the $q^{-1}$-Hermite polynomials
\cite{ismail2005}, the terms beyond the main term in the
asymptotic expansion of $s_n(t)$  seem to be intricate.

 In \eqref{eqone} we assume that $c(q,n)$ has the convergent expansion
\bea c(q,n) = \sum_{k=0}^m  c_k q^{nk} + o(q^{nm}), \quad c_0 =1.
\eea We further assume that $s_n(t)$ has the asymptotic expansion
\bea \label{eqasymsn} s_n(t) =    \sum_{k=0}^m f_{n,k}(1/t^2)
q^{nk} +\textup{ o}(q^{nm}). \eea Now substitute for $s_n$ from
\eqref{eqasymsn} in \eqref{eqone} and equate the coefficients of
$q^{nk}$. When $k=0$ we  see that $f_{n,0}(1/t^2)$ solves \bea
\label{eq2ndorderqAiry} y(u) - y(u/q) - u y(qu) =0, \eea with  $u
= 1/t^2$. When $ k > 0$ we conclude that \bea \label{eqfuneqfk}
\begin{gathered}
\quad f_{n,k}(u) - q^kf_{n+1, k}(u/q) - u q^{-k}f_{n-1,k}(qu)  \qquad \qquad \\
\qquad \qquad  = uf_{n, k-1}(u) +
u \sum_{j=0}^{k-1} c_{k-j}q^{-j}f_{n-1,j}(qu),
\end{gathered}
\eea for $k=1, 2, \cdots$. Since $s_n(t)$ is a polynomial in
$1/t^2$ we expect $f_{n,k}(u)$ to be analytic in $u$ in a
neighborhood of $u=0$. Thus $f_{n,0}(u) = A_q(u)$.   Note that
  \eqref{eqfuneqfk} implies $f_{n,k}(0)=0$ for $k >0$, since $s_n(\infty) =1= A_q(0)$.
Let $g_{n,k}(u) = f_{n,k}(u)/u$, for $k >0$.  Thus   \eqref{eqfuneqfk}  gives
\bea
\label{eqgn}
g_{n,1}(u) - g_{n+1, 1}(u/q) -  ug_{n-1,1}(qu) = A_q(u) +c_1 A_q(qu).
\eea

The case $c(q,n) = 1- q^n$ is very exceptional. In this case Ismail \cite{ismail2005} showed that
\bea
\label{eqIsmasymp}
s_n(t) = \sum_{k=0}^\infty \frac{q^{j(j+1)/2}}{(q;q)_jt^{2j}}\; A_q(q^j/t^2)\; q^{jn}.
\eea
In this case \eqref{eqfuneqfk} becomes
\bea
\begin{gathered}
\quad f_{n,k}(u) - q^kf_{n+1, k}(u/q) - u q^{-k}f_{n-1,k}(qu)  \qquad \qquad \\
\qquad \qquad  = uf_{n, k-1}(u) -
u  q^{1-k}f_{n-1,k-1}(qu),
\end{gathered}
\eea and by induction $f_{n,k}(u)$ must have the form
$u^kg_{n,k}(u)$ and $g_{n,k}$ satisfy \bea g_{n,1}(u) - g_{n+1,
1}(u/q) -  ug_{n-1,1}(qu) = g_{n,k-1}(u) -  g_{n-1,k-1} (qu), \eea
which leads to the solution given by \eqref{eqIsmasymp}.

The analysis of (\ref{eqgn}) in general seems to be complicated. As an
illustration we first consider the case $c(q,n) = 1+ c_1q^n$.  We
then let $g_{n,1} = \sum_{j=0}^\infty \zeta_{n,j} u^j$ and
substitute it in  \eqref{eqgn}.  Thus $\zeta_{n+1,0} = \zeta_{n,0}
+ 1+c_1$, and we conclude that $\zeta_{n,0}= n \zeta_{0,0} + n
(1+c_1)$. Therefore  $\zeta_{n,0}$ is independent of $n$ if and
only if $c_1 =  -1$.  When $c_1=-1$ we are led to the expansion
\eqref{eqIsmasymp}.

For general $c_1$  we make the Ansatz $g_{n,1}(u) = n F(u) + G(u)
+ o(1)$ which  leads to $F(u) = A_q(u)$ and that  $G(u)$ solves
\bea G(u) - G(u/q) - uG(qu) = 2 A_q(u/q) + c_1A_q(qu). \eea
Writing $G(u) = \sum_{j=0}^\infty \la_j u^j$ we find the following
two-term recurrence relation for $\la_n$ \bea (-1)^j\la_j
q^{j-j^2} =\frac{1}{1-q^j}\, (-1)^{j-1}q^{j-1-(j-1)^2}\la_{j-1}+
2(-1)^j \frac{1+c_1q^{2j}}{1-q^j}, \eea which is easy to solve.
This process can be iterated but it gets complicated as we proceed
to higher $k$.

The error analysis for \eqref{eqsw2} is even harder.  Ismail \cite{ismail2005} proved
\bea
S_n(q^{-2n}t;q) =  \frac{1}{(q;q)_\infty} \sum_{s=0}^\infty  \frac{(-1)^s}{(q;q)_s}
q^{\binom{s+1}{2}} q^{ns}A_q(q^{-s}/t),
\eea
and
\bea
\begin{gathered}
\frac{q^{n^2}L_n^{(\alpha)}\(x_n(t);q\)}{(-t)^n}
=\frac{1}{(q;q)_\infty}\sum_{m=0}^\infty \frac{q^{m/2}}{(q;q)_m}
\, q^{mn} \\
\times
\sum_{s=0}^m \frac{(q;q)_m}{(q;q)_s(q;q)_{m-s}} (-1)^{m-s}
q^{s\al+ (m-2s)^2/2} A_q(q^{2s-m}/t).
\end{gathered}
\eea

Since $S_n(\infty) = 1/(q;q)_n$  we let $S_n(t) =
\frac{1}{(q;q)_n} \sum_{k=0}^\infty f_{n,k}(u) q^{nk}$ where $u =
1/t$.  Equation \eqref{eqsw2} leads to \bea \label{eqSW}
f_{n,0}(u) = f_{n+1,0}(u/q^2) + \frac{(1+q)}{q} u f_{n,0}(u) +
qu^2f_{n-1,0}(q^2u). \eea Any constant times $A_q(u)$ will solve
\eqref{eqSW} as per $(i)$ of Proposition 4.1.  In general, we have
\bea
\begin{gathered}
f_{n,k}(u) = f_{n+1,k}(u/q^2)q^k +   \frac{(1+q)u}{q} \sum_{j=0}^k c_{k-j} f_{n,j}(u) \qquad  \qquad \\
\qquad \qquad
+ qu^2 \sum_{j=0}^k d_{k-j} q^{-j} f_{n-1,j}(uq^2) - u^2 \sum_{j=0}^{k-1} d_{k-1-j} q^{-j} f_{n-1,j}(uq^2).
\end{gathered}
\eea
We do not expect $f_{n,k}$ to depend on $n$.  One reason is the
following. In \eqref{eqsw2}, let $S_n(t) = \frac{1}{(q;q)_n} +
\sum_{j=1}^n a_{n,j}t^{-j}$. We then have \bea a_{n,1} =
(1-q^{n+1})a_{n+1,1}q^{-2} + \frac{1+q}{q(q;q)_n}c(q,n). \notag
\eea Thus \bea \notag q^{-2n}(q;q)_n a_{n,1} =
q^{-2n-2}(q;q)_{n+1} a_{n+1,1} +  \frac{1+q}{q}c(q,n) q^{-2n} \eea
which gives $q^{-2n}(q;q)_n a_{n,1} = (1+1/q)
\sum_{j=0}^{n-1}c(q,j) q^{-2j}$.


\bigskip

\noindent M. E. H. Ismail,
  City University of  Hong Kong,
Tat Chee Avenue, Kowloon, Hong Kong\\
and King Saud University,  Riyadh, Saudi Arabia\\
  email: ismail@math.ucf.edu

  \bigskip

\noindent X. Li, University of Central Florida, Orlando, Florida 32816, USA\\
email: xli@math.ucf.edu

\end{document}